\newtheorem{theo}{Theorem}
\newtheorem{lemma}[theo]{Lemma}
\newtheorem{cor}[theo]{Corollary}
\newtheorem{defi}[theo]{Definition}
\newtheorem*{theononumber}{Theorem}
\theoremstyle{definition}
\newtheorem{ex}[theo]{Example}
\def\q{\mathfrak{q}}
\def\s{\sigma}
\def\Gl{\operatorname{Gl}}
\def\ida{\mathfrak{a}}
\newcommand{\idb}{\mathfrak{b}}
\def\m{\mathfrak{m}}
\def\pa{\partial}
\def\<{\langle}
\def\>{\rangle}
\def\de{\delta}
\def\sdp{\Sigma\Delta\partial}
\def\sdP{\Sigma\Delta\Pi}
\def\sd{{\Sigma\Delta}}
\author{Michael Wibmer}
\title{Existence of $\partial$-parameterized Picard-Vessiot extensions over fields with algebraically closed constants}
\begin{document}
\maketitle

\begin{abstract}
The purpose of this short note is to establish the existence of \mbox{$\pa$-parameterized} Picard-Vessiot extensions of systems of linear difference-differential equations
over difference-differential fields with algebraically closed constants.
\end{abstract}

\renewcommand{\labelenumi}{{\rm (\roman{enumi})}}

\section*{Introduction}

A $\Pi$-parameterized Picard-Vessiot theory of linear differential equations has been introduced in \cite{CassidySinger:GaloisTheoryofParameterizedDifferentialEquations}. Here $\Pi=\{\pa_1,\ldots,\pa_m\}$ is a finite set of commuting derivations. In \cite{HardouinSinger:DifferentialGaloisTheoryofLinearDifferenceEquations} this theory has been extended to include linear difference equations. In this article we shall only be concerned with the case of one derivation parameter, i.e. $\Pi=\{\pa\}$ or $m=1$.

The theories in \cite{CassidySinger:GaloisTheoryofParameterizedDifferentialEquations} and \cite{HardouinSinger:DifferentialGaloisTheoryofLinearDifferenceEquations} are build under the assumption that the $\Sigma\Delta$-constants of the base $\sdP$-field are $\Pi$-algebraically closed. Here $\Sigma$ is a set of automorphism, $\Delta$ is a set derivations, all the operators are assumed to commute and one is interested in the solutions of systems of linear difference-differential equations in the operators $\Sigma$ and $\Delta$. The Galois groups, which are $\Pi$-algebraic groups over the $\sd$-constants, measure the $\Pi$-algebraic relations among the solutions.

The assumption that the $\sd$-constants are $\Pi$-algebraically closed is quite dissatisfactory because:
\begin{itemize}
\item There are no natural examples of $\Pi$-algebraically closed fields.\footnote{While this statement expresses a subjective opinion rather than a solid mathematical fact, it at least seems to be a commonly accepted opinion. See e.g. \cite[end of first paragraph of Section 6.2]{Poizat:ACourseinModelTheory}.}
\item In the concrete applications, one always has to ``tensor things up'' to a $\Pi$-algebraic closure to be able to apply the theory and then one needs to find an ad hoc descent argument to get back to the situation originally of interest.
\end{itemize}

The main result of this article is the following theorem. (See Definition \ref{defi paPV} below for the definition of $\pa$-parameterized Picard-Vessiot rings.)

\begin{theononumber}
Let $K$ be a $\Sigma\Delta\pa$-field of characteristic zero. Then for every $\sd$-linear system over $K$ there exists a $\pa$-parameterized Picard-Vessiot ring $R$ such that the $\sd$-constants of $R$ are a finite algebraic field extension of the $\sd$-constants of $K$.
\end{theononumber}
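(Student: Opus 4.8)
First I would build the universal solution ring. Presenting the $\sd$-linear system by matrices $A_\sigma\in\Gl_n(K)$ ($\sigma\in\Sigma$) and $A_\delta\in M_n(K)$ ($\delta\in\Delta$) satisfying the integrability conditions, let $S=K\langle X\rangle_{\pa}\bigl[\tfrac1{\det X}\bigr]$ be the $\sdp$-$K$-algebra generated by an $n\times n$ matrix $X$ of $\pa$-indeterminates and $\tfrac1{\det X}$, with $\Sigma,\Delta$ acting through $\sigma(X)=A_\sigma X$ and $\delta(X)=A_\delta X$. This ring is universal: $\sdp$-homomorphisms out of $S$ classify fundamental solution matrices of the system. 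By Definition~\ref{defi paPV} a $\pa$-parameterized Picard--Vessiot ring for the system is exactly a quotient $S/\m$ by a maximal $\sdp$-ideal $\m$, and such $\m$ exist by Zorn's Lemma; so the entire content of the theorem is to choose $\m$ so that the $\sd$-constants of $S/\m$ stay finite over $C:=K^{\sd}$. This choice is genuinely delicate: for a careless maximal $\sdp$-ideal the $\sd$-constants of the quotient can be infinite over $C$. (For the one-dimensional system $\delta y=0$ over $K=\mathbb{Q}(s)$ with $\delta=0$ and $\pa=\tfrac{d}{ds}$, the ring $\mathbb{Q}(s)[\ell,\tfrac1\ell]$ with $\pa\ell=\tfrac1s$ is a $\pa$-parameterized Picard--Vessiot ring whose $\sd$-constants are all of it.)

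The guiding idea is to produce a quotient $S/\m$ that is not merely $\sdp$-simple but $\sd$-simple (equivalently, $\m$ is $\pa$-stable and maximal already among $\sd$-ideals), because a $\sd$-simple quotient assembled out of classical Picard--Vessiot data has its $\sd$-constants under control. Concretely, by the classical Picard--Vessiot existence theorem the system admits a classical $\sd$-Picard--Vessiot ring $R_0$ over $K$ --- a finitely generated $\sd$-simple $\sd$-$K$-algebra with fundamental solution matrix $Z$ --- whose field of $\sd$-constants $R_0^{\sd}$ is, in characteristic zero, a finite algebraic extension of $C$. I would then extend $\pa$ from $K$ to $R_0$, or to a mild enlargement of it, commuting with $\Sigma$ and $\Delta$. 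With the ansatz $\pa(Z)=BZ$ this asks for a matrix $B$ solving the inhomogeneous $\sd$-linear system $\delta(B)=\pa(A_\delta)+[A_\delta,B]$ and $\sigma(B)=\pa(A_\sigma)A_\sigma^{-1}+A_\sigma BA_\sigma^{-1}$; equivalently, $Z^{-1}BZ$ must be a $\sd$-primitive of $Z^{-1}\pa(A_\delta)Z$ (and a difference primitive for the $\sigma$-data). One adjoins such primitives to $R_0$; computing $\pa(B),\pa^2(B),\dots$ forces one to adjoin further $\sd$-primitives (of $Z^{-1}\pa^k(A_\delta)Z$ and of bracket corrections), and one iterates. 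At each stage there is an alternative: if the relevant element is already a $\sd$-derivative in the ring built so far, the primitive is an available element adjusted by a constant of integration, and the $\pa$-compatibility equations may pin this constant down inside a finite extension of the current $\sd$-constants; otherwise one adjoins a genuinely transcendental primitive, which --- the crucial classical fact --- adds no $\sd$-constants and preserves $\sd$-simplicity. Let $R$ be the $\sd$-$K$-algebra obtained on running this procedure to completion, equipped with the resulting $\pa$.

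Then $R$ is the desired ring. It is $\pa$-generated over $K$ by $Z$ and $\tfrac1{\det Z}$: the relations $\pa(Z)=BZ$ etc.\ express every $\pa^k(Z)$ through the adjoined generators, which in turn all lie in $K\langle Z\rangle_{\pa}[\tfrac1{\det Z}]$. It is $\sd$-simple, hence $\sdp$-simple, so $R\cong S/\m$ for a maximal $\sdp$-ideal $\m$ and $R$ is a $\pa$-parameterized Picard--Vessiot ring. And $R^{\sd}$ is a finite algebraic extension of $C$: the only enlargements of the $\sd$-constants occur at the finitely many ``degenerate'' stages and are each finite algebraic, while the transcendental stages contribute no $\sd$-constants at all.

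The point requiring real work is the control of this a priori infinite iteration. One must know that at every stage the inhomogeneous $\sd$-equation for the next primitive can be solved either by a transcendental element (adjoinable without creating $\sd$-constants or destroying $\sd$-simplicity) or by an element algebraic over the current ring; the forbidden possibility --- that the equation forces its solution to be a \emph{new transcendental $\sd$-constant} --- is what must be ruled out, together with the fact that only the ``good'', transcendental, alternative can fail finitely often, so that the accumulated extension of $C$ is finite. This is where the sharper tools of differential (and difference) algebra in characteristic zero enter: the Ritt--Raudenbush basis theorem, so that the radical $\sd$-ideals governing these obstructions are finitely generated and the ``degenerate'' conditions cut out proper closed subsets that can be avoided; the geometric reducedness of $R_0$ over $K$; and the standard structure theory of Picard--Vessiot extensions generated by primitives. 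Granting this analysis, the union of the nested finitely generated pieces is a $\sd$-simple $\sdp$-quotient of $S$ whose $\sd$-constants form a finite algebraic extension of $K^{\sd}$, which is the assertion.
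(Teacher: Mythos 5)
Your strategy---first build a classical $\sd$-Picard--Vessiot ring $R_0$ with fundamental matrix $Z$ and then extend $\pa$ to it via the ansatz $\pa(Z)=BZ$, adjoining $\sd$-primitives iteratively---is genuinely different from the paper's route, which never tries to extend $\pa$ to a prebuilt solution ring: the paper filters the universal ring $S=K\{X,\tfrac{1}{\det(X)}\}_\pa$ by the order subrings $S_d$ and inductively constructs compatible $\sd$-maximal ideals $\m_d$ with $\m_{d+1}\cap S_d=\m_d$ and $\pa(\m_d)\subset\m_{d+1}$, the entire difficulty being concentrated in the prolongation Lemma \ref{lemma main} (resting on Lando's Lemma \ref{lemma lando}), which guarantees that $\m_d$ together with $\pa(\m_d)$ never generates the unit ideal one level up. The problem is that your proposal does not prove its own hard step. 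Everything from ``the point requiring real work'' onward is exactly the content of the theorem and is only asserted: (a) that at each stage the required primitive can be adjoined either transcendentally ``without creating $\sd$-constants or destroying $\sd$-simplicity'' or else algebraically; (b) that the ``forbidden possibility'' of a forced new transcendental $\sd$-constant can be ruled out; and (c) that the algebraic/degenerate alternative occurs only finitely often, so that the accumulated extension of $K^\sd$ is finite. None of (a)--(c) is established, and the appeal to Ritt--Raudenbush and to ``proper closed subsets that can be avoided'' is not an argument. Point (a) is particularly delicate here: the classical fact ``adjoining an integral of a non-derivative creates no new constants'' is a statement about $\pa$-field extensions with the parameter playing no role, whereas your primitives are $\sd$-primitives (including difference equations of the form $\s(b)-b=a$) adjoined to a $\Sigma$-pseudo ring whose $\sd$-constants are not algebraically closed, a setting in which both simplicity and the constants can misbehave. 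Ruling out (b) is essentially equivalent to the non-collapsing statement $1\notin\idb$ in the paper's induction, for which you offer no substitute; and (c) would in any case need something like Lemma \ref{lemma finite} (Kolchin's finiteness for algebraic intermediate $\pa$-fields of finitely $\pa$-generated extensions), which you do not invoke.

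What is correct in your write-up is the frame: the universal ring $S$, the observation that it suffices to produce a $\sd$-simple, $\pa$-generated quotient of $S$, and the fact that in characteristic zero a classical $\sd$-Picard--Vessiot ring has $\sd$-constants finite over $K^\sd$---this is exactly the $d=0$ layer $S_0/\m_0$ of the paper's construction, and the paper then gets algebraicity of $R^\sd$ from the Chevalley-type result applied to every finitely generated layer $S_d/\m_d$, plus finiteness from Lemma \ref{lemma finite}. But the passage from the $d=0$ layer to a full $\pa$-stable object is the theorem, not a refinement of it, so as it stands the proposal has a genuine gap at its core. (A minor point: your logarithm example is $\sdp$-simple but not $\sd$-simple, so under Definition \ref{defi paPV} it is not a $\pa$-parameterized Picard--Vessiot ring; it illustrates condition (ii)$'$ versus (ii) rather than a badly chosen ring satisfying the paper's definition.)
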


With the above theorem in hand one can essentially remove the assumption of $\pa$-algebraically closed constants from the theories in \cite{CassidySinger:GaloisTheoryofParameterizedDifferentialEquations} and \cite{HardouinSinger:DifferentialGaloisTheoryofLinearDifferenceEquations}. This is carried out in more detail in \cite{Ovchinkiovetal:ParameterizedPicardVessiotExtensionsandAtiyahExtensions} (for the case $\Sigma=\emptyset$) and \cite[Section 1.2]{diVizioHardouin:Descent} (for the case $\Sigma=\{\sigma\}$ and $\Delta=\emptyset$). Of course one will have to accept some modifications:
\begin{itemize}
\item The uniqueness theorem has to be reformulated: If $R_1$ and $R_2$ are $\pa$-parameterized Picard-Vessiot rings over the base $\sdp$-field $K$ for the same system of equations then there exists a finitely $\pa$-generated constrained $\pa$-field extension $k$ of the $\sd$-constants $K^\sd$ which contains $R_1^\sd$ and $R_2^\sd$ such that $R_1\otimes_{R_1^\sd}k$ and $R_2\otimes_{R_2^\sd}k$ are isomorphic. (Recall from \cite{Kolchin:constrainedExtensions} that a finitely $\pa$-generated $\pa$-field extension is constrained if and only if it embeds into the differential closure of the base field.) Cf. \cite[Theorem 2.19, p. 28]{Wibmer:Chevalley}.

\item One can not naively identify the Galois group with the set of its $K^\sd$-rational points. Instead one should rather adopt a scheme theoretic point of view and consider functorially defined invariants. In particular one should define the Galois group by exhibiting an appropriate automorphism functor and showing that it is representable by a $\pa$-Hopf algebra. Cf. \cite{Dyckerhoff:InverseProblem}, \cite{Maurischat:GaloisTheoryforIterativeConnectionsandNonreducedGaloisGroups} and \cite{AmanoMasuokaTakeuchi:HopfPVtheory}.
\end{itemize}

The idea of the construction of the $\pa$-Picard-Vessiot ring in the above theorem has been inspired by the theory of difference kernels developed by R. Cohn \cite[Chapter 6]{Cohn:difference}. See \cite{Lando:JacobiBound} for a differential version. The method of the construction originated in \cite{Wibmer:Chevalley} where a similar result is obtained for the case of $\s$-parameterized linear differential equations, i.e. for the case of a difference parameter $\s$ instead of a differential parameter $\pa$ as in this text. See \cite[Lemma 2.16, p. 27]{Wibmer:Chevalley}. The method is also used in \cite{diVizioHardouin:Descent} (for the case $\Sigma=\{\sigma\}, \Delta=\emptyset$ and $\Pi=\{\pa\}$).

An approach to parameterized (and classical) Picard-Vessiot theory over fields with not necessarily $\Pi$-algebraically closed fields of constants (for the case $\Sigma=\emptyset$) based on the tannakian machinery can be found in \cite{Ovchinkiovetal:ParameterizedPicardVessiotExtensionsandAtiyahExtensions}. The existence results in \cite{Ovchinkiovetal:ParameterizedPicardVessiotExtensionsandAtiyahExtensions} are in a certain sense more general, in particular they are not restricted to the case $\Pi=\{\pa\}$. However, due to its  simple and elementary nature, the construction of the $\pa$-parameterized Picard-Vessiot extension presented in this article, seems to be of an independent interest.

Another approach to parameterized Picard-Vessiot theory over fields with not necessarily $\Pi$-algebraically closed fields of constants has been presented by A. Nieto in \cite{Nieto:OnSigmaDeltaPicardVessiotExtension} (for the case $\Sigma=\{\sigma\}, \Delta=\emptyset$ and $\Pi=\{\pa\}$). The Picard-Vessiot extensions considered in \cite{Nieto:OnSigmaDeltaPicardVessiotExtension} usually do introduce a somewhat large field of new constants.

\bigskip

\section*{Existence of $\pa$-parameterized Picard-Vessiot \mbox{extensions}}

All rings are assumed to be commutative and all fields are assumed to be of characteristic zero.
We start by recalling the basic setup from \cite{CassidySinger:GaloisTheoryofParameterizedDifferentialEquations} and \cite{HardouinSinger:DifferentialGaloisTheoryofLinearDifferenceEquations}.

Let $\Sigma=\{\s_i\}$ and $\Delta=\{\de_i\}$ denote arbitrary sets of symbols. The case that $\Sigma$ or $\Delta$ is empty is allowed but we exclude the case that both $\Sigma$ and $\Delta$ are empty. By a \emph{$\sdp$-ring} one means a ring $R$ together with ring automorphisms $\s:R\rightarrow R$ for every $\s\in\Sigma$, derivations $\de:R\rightarrow R$ for every $\de\in\Delta$ and a derivation $\pa:R\rightarrow R$ such that all the operators commute, i.e. $\mu(\tau(r))=\tau((\mu(r))$ for all $r\in R$ and all $\mu,\tau\in \Sigma\cup\Delta\cup\{\pa\}$. As illustrated in \cite{AmanoMasuokaTakeuchi:HopfPVtheory} the action of the operators can quite conveniently be summarized by a coalgebra action but we shall not adopt this point of view here. A $\sdp$-field is a $\sdp$-ring whose underlying ring is a field. A morphism of $\sdp$-rings is a morphism of rings that commutes with all the operators. There are some further obvious notions like $\sdp$-ideal, $K$-$\sdp$ algebra,... that generalize the well-known concepts from difference or differential algebra. The constants are denoted with superscripts, for example if $R$ is a $\sdp$-ring then
\[R^\sd=\{r\in R |\ \s(r)=r \ \forall\ \s\in\Sigma \text{ and } \delta(r)=0 \ \forall\ \delta\in \Delta\}\] denotes the ring of $\sd$-constants.

 A \emph{$\sd$-linear system} over a $\sdp$-field $K$ is a system of equations of the form
\begin{equation}\label{linear system}
\begin{aligned}
\s_i(Y)&=A_iY\ \ A_i\in\Gl_n(K),\ \s_i\in\Sigma \\
\de_i(Y)&=B_iY\ \ B_i\in K^{n\times n},\ \de_i\in\Delta
\end{aligned}
\end{equation}
where the $A_i$ and $B_j$ satisfy the integrability conditions
\begin{equation} \label{integrability conditions}
\begin{aligned}
 \s_i(A_j)&=\s_j(A_i)A_j\\
\s_i(B_j)A_i&=\de_j(A_i)+A_iB_j\\
\de_i(A_j)+A_jA_i&=\de_j(A_i)+A_iA_j
\end{aligned}
\end{equation}
for all $\s_i,\s_j\in\Sigma$ and all $\de_i,\de_j\in\Delta$.
The above integrability conditions express the property that $\mu(\tau(Z))=\tau(\mu(Z))$ for all $\mu,\tau\in\Sigma\cup\Delta$ for a solution matrix $Z\in\Gl_n(R)$ in some $K$-$\Sigma\Delta$-algebra $R$.

There are two special cases of crucial importance: The first one is $\Sigma=\emptyset$ and $\Delta=\{\de\}$ in which case one is simply looking at a linear differential equation
\[\de(Y)=BY,\ \ B\in K^{n\times n}.\]
The second one is $\Sigma=\{\s\}$ and $\Delta=\emptyset$ in which case one is simply looking at a linear difference equation
\[\s(Y)=AY, \ \ A\in\Gl_n(K).\]

\bigskip

In the (usual) Picard-Vessiot theory of linear difference equations one needs to admit solution rings which are slightly more general than fields to be able to associate an appropriate ``splitting field'' (the Picard-Vessiot extension) to \emph{every} linear difference equation (\cite{SingerPut:difference}). These ``minimal solution fields'' are not quite fields but rather finite direct products of fields. Because of their relevance for the subject we think they deserve a name of their own:

\begin{defi}
A \emph{$\Sigma$-pseudo field} is a Noetherian $\Sigma$-simple $\Sigma$-ring such that every non-zero divisor is invertible.
\end{defi}
It is not too hard to see that a $\Sigma$-pseudo field is a finite direct product of fields with a transitive action of the group generated by $\Sigma$ on the factors. See e.g. \cite[Cor. 1.16, p. 12]{SingerPut:difference} or \cite[Prop. 11.5, p. 162]{AmanoMasuokaTakeuchi:HopfPVtheory}. Pseudo-fields are also used in \cite{AmanoMasuoka:artiniansimple}, \cite{Wibmer:thesis}, \cite{Wibmer:Chevalley}, \cite{Trushin:DifferenceNullstellsatzCaseOfFiniteGroup} and \cite{Trushin:DifferenceNullstellensatz}. The pseudo-fields in \cite{Trushin:DifferenceNullstellensatz} are somewhat more general; they need not be Noetherian.

\begin{defi}
A $\sdp$-ring $L$ is called a \emph{$\Sigma$-pseudo $\Delta\pa$-field} if $(L,\Sigma)$ is a $\Sigma$-pseudo field.
\end{defi}

If $\Sigma=\emptyset$ then a $\Sigma$-pseudo $\Delta\pa$-field is simply a $\Delta\pa$-field. (A simple ring is a field.)
We recall the fundamental notions of $\pa$-Picard-Vessiot extension and $\pa$-Picard-Vessiot ring from
\cite[Def. 9.4, p. 140]{CassidySinger:GaloisTheoryofParameterizedDifferentialEquations} and \cite[Def. 6.10, p. 363]{HardouinSinger:DifferentialGaloisTheoryofLinearDifferenceEquations}\footnote{This notation is not consistent with \cite{HardouinSinger:DifferentialGaloisTheoryofLinearDifferenceEquations}. The notation in use here however has been proposed by one of the authors of \cite{HardouinSinger:DifferentialGaloisTheoryofLinearDifferenceEquations} and appears to be somewhat more practical. See \cite{diVizioHardouin:Descent}.}.

\begin{defi} \label{defi paPV}
Let $K$ be a $\sdp$-field. A \emph{$\pa$-Picard-Vessiot extension} for the $\sd$-linear system (\ref{linear system}) is a $\Sigma$-pseudo $\Delta\pa$-field extension $L$ of $K$ such that
\begin{enumerate}
\item $L$ is $\pa$-generated by a fundamental solution matrix for (\ref{linear system}), i.e. there exists $Z\in\Gl_n(L)$ such that $\s_i(Z)=A_iZ$, $\de_i(Z)=B_iZ$ for all $\s_i\in\Sigma$, $\de_i\in \Delta$ and $L=K\langle Z_{ij};\ 1\leq i,j\leq n\rangle_\pa$. \footnote{Here we use $K\langle Z_{ij};\ 1\leq i,j\leq n\rangle_\pa$ to denote the smallest $K$-$\pa$-subalgebra of $L$ which is closed under taking inverses and contains the $Z_{ij}$'s. If $L$ is a field (for example if $\Sigma=\emptyset$) this simply means that $L$ is generated by the entries of $Z$ as a $\pa$-field extension of $K$.}
\item $L^\sd=K^\sd$.
\end{enumerate}

By a \emph{$\pa$-Picard-Vessiot ring} or \emph{$\pa$-parameterized Picard-Vessiot ring} for the $\Sigma\Delta$-linear system (\ref{linear system}) we mean a $K$-$\sdp$-algebra $R$ such that
\begin{enumerate}
\item $R$ is $\pa$-generated by a fundamental solution matrix for (\ref{linear system}), i.e. there exists $Z\in\Gl_n(R)$ such that $\s_i(Z)=A_iZ$, $\de_i(Z)=B_iZ$ for all $\s_i\in\Sigma$, $\de_i\in \Delta$ and as a $K$-$\pa$-algebra $R$ is generated by the entries of $Z$ and the inverse of the determinant of $Z$. (In formulas this is expressed as $R=K\{Z,\frac{1}{\det(Z)}\}_\pa$.)
\item $R$ is $\Sigma\Delta$-simple.
\end{enumerate}
\end{defi}

The above definition of $\pa$-Picard-Vessiot rings slightly deviates from the definition in \cite{HardouinSinger:DifferentialGaloisTheoryofLinearDifferenceEquations} and  \cite{CassidySinger:GaloisTheoryofParameterizedDifferentialEquations} where (ii) is replaced by

\begin{enumerate}
\setcounter{enumi}{1}
\renewcommand{\labelenumi}{{\rm (\roman{enumi})'}}
 \item R is $\sdp$-simple.
\renewcommand{\labelenumi}{{\rm (\roman{enumi})}}
\end{enumerate}

However in \cite[Cor. 6.22, p. 372]{HardouinSinger:DifferentialGaloisTheoryofLinearDifferenceEquations} it is shown that if $K^\sd$ is $\pa$-algebraically closed then (i) and (ii)' imply (ii). Since clearly a $\sd$-simple ring is $\sdp$-simple this shows that (ii) and (ii)' are equivalent.
Thus, under the assumption of $\pa$-algebraically closed $\sd$-constants -- which is the standard assumption in \cite{CassidySinger:GaloisTheoryofParameterizedDifferentialEquations} and \cite{HardouinSinger:DifferentialGaloisTheoryofLinearDifferenceEquations} -- there is no real difference between (ii) and (ii)'.

In all generality, i.e. if the $\sd$-constants need not be $\pa$-algebraically closed, the choice of (ii) over (ii)' is motivated by the following facts:

\begin{itemize}
\item One can always satisfy (ii). (See Theorem \ref{theo main} below.)
\item If $L$ is a $\pa$-Picard-Vessiot extension with fundamental solution matrix $Z$ then $R=K\{Z,\frac{1}{\det(Z)}\}_\pa$ is a $\pa$-Picard-Vessiot ring, i.e. satisfies (ii).
\item If a $K$-$\sdp$ algebra $R$ satisfies (i) and has the natural property that $K\left[Z,\pa(Z),\ldots,\pa^d(Z),\tfrac{1}{\det(Z)}\right]$ is a (usual) Picard-Vessiot ring for every $d\geq 0$ (cf. \cite[Prop. 6.21]{HardouinSinger:DifferentialGaloisTheoryofLinearDifferenceEquations}) then $R$ automatically satisfies (ii).
\item Condition (ii) in the definition of $\pa$-Picard-Vessiot extensions is also just a condition on $\sd$ and not on $\sdp$.
\item In the analogous setting with $\pa$ replaced by $\sigma$, i.e. in the study of linear differential equations with a difference parameter \emph{only} the analog of (ii) leads to a satisfactory theory.
\item If the $\sd$-constants are not $\pa$-algebraically closed, condition (ii) simply seems more practical. For example from (ii)' it is not even clear why $R^\sd$ should be a field.
\end{itemize}

\bigskip

\bigskip

For the proof of the main theorem we shall need three preparatory lemmas and some more notation:

Let $K$ be a $\pa$-field. The $\pa$-polynomial ring $K\{x\}=K\{x\}_\pa$ in the $\pa$-variables $x=(x_1,\ldots,x_n)$ over $K$ is the polynomial ring in the infinitely many
variables $(\pa^d(x_i))_{i=1,\ldots,n, d\ge 0}$ over $K$ with the natural action of $\pa$. For $d\geq 0$ let $K\{x\}_d$ denote the set of all differential polynomials of order lesser or equal to $d$, i.e.
\[K\{x\}_d=K[x,\pa(x),\ldots,\pa^d(x)]\subset K\{x\}.\]
For consistency reasons we set $K\{x\}_{-1}=K$.
Then $K\{x\}_d$ is a $K$-sub algebra of $K\{x\}$, $\cup_{d\geq0}K\{x\}_d=K\{x\}$ and $\pa\colon K\{x\}\to K\{x\}$ restricts to a derivation $\pa\colon K\{x\}_{d-1}\to K\{x\}_d$.

The following lemma is due to B. Lando (\cite[Prop. 1, p. 121]{Lando:JacobiBound}). For the convenience of the reader and the sake of completeness we include the proof.
\begin{lemma} \label{lemma lando}
Let $d\geq 0$ and let $\q\subset K\{x\}_d$ be a prime ideal such that $\pa(\q\cap K\{x\}_{d-1})\subset\q$. Then there exists a prime ideal $\q'$ of $K\{x\}_{d+1}$ such that
$\q'\cap K\{x\}_d=\q$ and $\pa(\q)\subset\q'$.
\end{lemma}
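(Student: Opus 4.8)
The plan is to extend the derivation $\pa\colon K\{x\}_d\to K\{x\}_{d+1}$ (in the sense that it sends $\pa^i(x)$ to $\pa^{i+1}(x)$ for $i\le d$) to a derivation defined on the localization at $\q$, push $\q$ forward through the quotient construction, and then contract an appropriate prime back down. More precisely, write $A=K\{x\}_d$ and $B=K\{x\}_{d+1}$; the new variables of $B$ are the entries of $\pa^{d+1}(x)$, so $B$ is a polynomial ring over $A$ in $n$ indeterminates. The hypothesis $\pa(\q\cap A_{d-1})\subset\q$, where $A_{d-1}=K\{x\}_{d-1}$, is exactly what guarantees that $\pa$ restricted to $A_{d-1}\subset A$ induces a derivation on the residue field $\kappa(\q)=\operatorname{Frac}(A/\q)$ valued in $\kappa(\q)$: indeed $\pa$ maps $A_{d-1}$ into $A$, and the condition says $\q\cap A_{d-1}$ is sent into $\q$, so composing with $A\twoheadrightarrow A/\q$ gives a well-defined additive map $A_{d-1}/(\q\cap A_{d-1})\to A/\q$ satisfying the Leibniz rule, which then extends uniquely to a derivation on the fraction field. (Here one uses characteristic zero, or at least that the relevant localizations are separable, to extend a derivation over a field extension.)

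First I would extend this derivation $\bar\pa$ on $\kappa(\q)$ to a derivation on the polynomial ring $\kappa(\q)[\pa^{d+1}(x)]$ by declaring its value on each new indeterminate $\pa^{d+1}(x_i)$ to be anything — say $0$, or better, a choice making the diagram commute is automatic since $\pa^{d+1}(x_i)$ is a free variable upstairs. The point is that $B\otimes_A\kappa(\q) = \kappa(\q)[\pa^{d+1}(x_1),\ldots,\pa^{d+1}(x_n)]$ carries a derivation $\tilde\pa$ extending $\bar\pa$ on the coefficient field, and the structure map $B\to B\otimes_A\kappa(\q)$ is compatible with $\pa$ in the sense that the composite $A\{x\}$-wise agrees: $\pa$ on $B$ followed by reduction equals $\tilde\pa$ followed by the appropriate identification on the image of $A$, provided $\tilde\pa$ sends $\pa^{d}(x_i)\mapsto \pa^{d+1}(x_i)$. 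So I should instead choose $\tilde\pa(\pa^{d+1}(x_i))$ freely (it lands in a further polynomial extension, but we only need a derivation on $B\otimes_A\kappa(\q)$ itself, so set it to $0$), and note that $B\otimes_A\kappa(\q)$ is then a $\pa$-ring into which $B$ maps as $\pa$-rings after we check $\pa(\q B)\subseteq \q B + (\text{stuff})$ — this is where the care lies.

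The cleanest route: in the $\pa$-ring $\kappa(\q)[\pa^{d+1}(x)]$ with the derivation $\tilde\pa$ just described, take any minimal prime $\mathfrak{P}$ over the ideal generated by the image of $\pa(\q)$; since $\tilde\pa$ restricted to the image of $\q$ is, by the hypothesis and the construction, forced to land in an ideal we can control, one shows $(\text{image of }\pa(\q))$ generates a proper $\tilde\pa$-stable ideal, hence is contained in a $\tilde\pa$-prime, and in characteristic zero a minimal prime over a $\pa$-ideal is itself a $\pa$-ideal (Keigher/Kolchin). Let $\q'$ be the contraction of $\mathfrak{P}$ to $B$. Then $\q'$ is prime, $\q'\cap A=\q$ because $\mathfrak{P}$ lies over the zero ideal of $\kappa(\q)$ (which contracts to $\q$ in $A$), and $\pa(\q)\subset\q'$ by construction. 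I expect the main obstacle to be verifying that the pushed-forward generators of $\pa(\q)$ indeed generate a \emph{proper} ideal in $\kappa(\q)[\pa^{d+1}(x)]$ — equivalently, that one hasn't collapsed everything — and, relatedly, bookkeeping the identification $\pa^{d+1}(x_i)\leftrightarrow$ new variable so that $\pa$ on $B$ matches $\tilde\pa$; once the commuting square of $\pa$-rings $A\to B$, $A\to\kappa(\q)$, $B\to\kappa(\q)[\pa^{d+1}(x)]$ is set up correctly, the rest is formal.
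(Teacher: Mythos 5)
Your opening reduction is on the right track, but as written the proof has a genuine gap, and it sits exactly at the point you flag as ``the main obstacle''. Two remarks on the setup first (writing $A=K\{x\}_d$, $B=K\{x\}_{d+1}$, $\q''=\q\cap K\{x\}_{d-1}$, and $\pa^j(a)$ for the image of $\pa^j(x)$ in $k(\q)=\operatorname{Frac}(A/\q)$): the hypothesis $\pa(\q'')\subset\q$ gives a derivation defined a priori only on $k(\q'')=\operatorname{Frac}\bigl(K\{x\}_{d-1}/\q''\bigr)$ with values in $k(\q)$; obtaining a derivation $\bar\pa$ defined on all of $k(\q)$ is a (non-unique) extension across the field extension $k(\q'')\subset k(\q)$, and that is where characteristic zero is used. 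More seriously, the auxiliary differential structure of your second and third paragraphs does not work as stated: with $\tilde\pa$ extending $\bar\pa$ on $k(\q)$ and $\tilde\pa(\pa^{d+1}(x_i))=0$, the map $B\to B\otimes_A k(\q)$ does \emph{not} intertwine $\pa\colon A\to B$ with $\tilde\pa$ --- test it on $\pa^{d}(x_i)$: one composite gives the free variable $\pa^{d+1}(x_i)$, the other gives $\bar\pa(\pa^d(a_i))\in k(\q)$. So neither the claimed commuting square nor the $\tilde\pa$-stability of the ideal generated by the image of $\pa(\q)$ is established (the Keigher minimal-prime step is in any case superfluous: the lemma only asks that $\q'$ contain $\pa(\q)$, so any prime over a proper ideal containing that image would do). What remains is precisely the unproven claim that the image of $\pa(\q)$ generates a proper ideal of $k(\q)[\pa^{d+1}(x)]$. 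This is not a routine verification: the images of the elements $\pa(p)$, $p\in\q$, are affine-linear in the new variables, and properness of the ideal they generate amounts to solvability of that affine-linear system, which is essentially equivalent to the existence of the derivation extension itself. Deferring it leaves the heart of the lemma unproved.

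The gap closes at once if you use the extension $\bar\pa\colon k(\q)\to k(\q)$ you already have (satisfying $\bar\pa(\pa^j(a))=\pa^{j+1}(a)$ for $j\le d-1$) to choose the images of the new variables, instead of setting anything to $0$: define the $K$-algebra homomorphism $\psi\colon K\{x\}_{d+1}\to k(\q)$ by $\psi(\pa^j(x))=\pa^j(a)$ for $j\le d$ and $\psi(\pa^{d+1}(x))=\bar\pa(\pa^d(a))$. Both $\psi\circ\pa$ and $\bar\pa\circ\psi$ are additive maps $K\{x\}_d\to k(\q)$ satisfying the Leibniz rule along $\psi$, and they agree on $K$ and on the generators $\pa^j(x)$, $j\le d$; hence they are equal. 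Consequently $\q'=\ker\psi$ is prime, $\q'\cap K\{x\}_d=\q$, and $\pa(\q)\subset\q'$. In your language, $\psi$ is a $k(\q)$-point of $k(\q)[\pa^{d+1}(x)]$ annihilating the image of $\pa(\q)$, which proves the properness you were missing and renders the tensor product, the minimal primes and the Keigher argument unnecessary; this is exactly the paper's proof (following Lando).
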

\begin{proof}
Set $\q''=\q\cap K\{x\}_{d-1}$. Let $k(\q)=K\left(a,\pa(a),\ldots,\pa^{d}(a)\right)$ denote the residue field of $\q\subset K\{x\}_d$. Here we use $\pa^j(a)$ to denote the image of $\pa^j(x)=(\pa^j(x_1),\ldots,\pa^j(x_n))$ in $k(\q)$ for $j=0,\ldots,d$.
Similarly let $k(\q'')=K\left(a,\ldots,\pa^{d-1}(a)\right)$ denote the residue field of $\q''\subset K\{x\}_{d-1}$. Then
$k(\q'')\subset k(\q)$ and by assumption we have a well-defined derivation
$\pa\colon k(\q'')\rightarrow k(\q)$ satisfying $\pa(\pa^j(a))=\pa^{j+1}(a)$ for $j=0,\ldots,d-1$.
We can extend it to a derivation $\pa\colon k(\q)\to k(\q)$ (\cite[Cor. 1, Chapter V, \S 16, No. 3, A.V.130]{Bourbaki:Algebra2}).

Let $\psi\colon K\{x\}_{d+1}\to k(\q)$ denote the $K$-algebra morphism determined by $\psi(\pa^{j}(x))=\pa^j(a)$ for $j=0,\ldots,d+1$. By construction the diagram
\[
\xymatrix{
K\{x\}_{d}\ar[r]^-{\pa}  \ar[d]_\psi & K\{x\}_{d+1} \ar[d]^\psi \\
k(\q) \ar[r]^-{\pa} & k(\q)
}
\]
is commutative. It follows that $\q'=\ker{\psi}\subset K\{x\}_{d+1}$ satisfies $\q'\cap K\{x\}_d=\q$ and $\pa(\q)\subset\q'$.
\end{proof}

\begin{lemma} \label{lemma main}
Let $d\geq 0$ and let $\ida\subset K\{x\}_d$ be a radical ideal such that $1\notin\ida$ and $\pa(\ida'')\subset\ida$, where $\ida''=\ida\cap K\{x\}_{d-1}$. Let $\idb$ denote the ideal of $K\{x\}_{d+1}$ generated by $\ida$ and $\pa(\ida)$.

Then for every prime ideal $\q''$ of $K\{x\}_{d-1}$ that is minimal above $\ida''$ there exists a prime ideal $\q'$ of $K\{x\}_{d+1}$ such that $\q''=\q'\cap K\{x\}_{d-1}$, $\q=\q'\cap K\{x\}_d$ is a minimal prime ideal above $\ida$ and $\pa(\q)\subset\q'$. Moreover $\idb\subset\q'$, in particular $1\notin\idb$.
\end{lemma}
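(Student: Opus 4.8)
The strategy is to reduce the statement to Lemma \ref{lemma lando} by passing to an appropriate localization (or quotient) and carefully tracking how the minimal prime $\q''$ of $\ida''$ sits inside the various polynomial rings. First I would fix a prime $\q''$ of $K\{x\}_{d-1}$ minimal over $\ida''$ and localize everything at the multiplicative set $S=K\{x\}_{d-1}\setminus\q''$; equivalently, I would base change along $K\{x\}_{d-1}\to k(\q'')$, or simply work inside the fiber. Since $\q''$ is minimal over the radical ideal $\ida''$, the image of $\ida''$ in the localization $S^{-1}K\{x\}_{d-1}$ is the maximal ideal $S^{-1}\q''$ (a radical ideal all of whose primes lie over $\q''$, but minimality forces there to be only one). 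The derivation $\pa\colon K\{x\}_{d-1}\to K\{x\}_d$ then extends to $S^{-1}K\{x\}_{d-1}\to S^{-1}K\{x\}_d$ by the quotient rule, and the hypothesis $\pa(\ida'')\subset\ida$ is preserved.

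Next I would pick a prime $\q$ of $K\{x\}_d$ that is minimal over $\ida$ and satisfies $\q\cap K\{x\}_{d-1}=\q''$. Such a $\q$ exists: because $\pa(\ida'')\subseteq\ida$, no prime minimal over $\ida$ contracts to something strictly smaller than a given minimal prime of $\ida''$ in an incompatible way — more precisely, the going-down-type argument is that the minimal primes of $\ida$ in $K\{x\}_d$ contract to primes of $K\{x\}_{d-1}$ containing $\ida''$, and one can arrange the contraction to be exactly $\q''$ by choosing $\q$ minimal over the extension of $\ida$ to the localization $S^{-1}K\{x\}_d$. I should check that this $\q$ satisfies the hypothesis of Lemma \ref{lemma lando}, namely $\pa(\q\cap K\{x\}_{d-1})\subseteq\q$, i.e. $\pa(\q'')\subseteq\q$: this follows because $\pa(\q'')\subseteq\pa(\overline{\ida''})\subseteq$ (the saturation of) $\ida$ at $\q''$ — here I would want $\q''=\q\cap K\{x\}_{d-1}$ to be minimal over $\ida''$ and use that the radical of $\ida$ in $S^{-1}K\{x\}_d$ is the intersection of its minimal primes, one of which is $S^{-1}\q$.

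Applying Lemma \ref{lemma lando} to $d$ and $\q$ then yields a prime $\q'$ of $K\{x\}_{d+1}$ with $\q'\cap K\{x\}_d=\q$ and $\pa(\q)\subseteq\q'$. From $\q'\cap K\{x\}_d=\q$ we get $\q'\cap K\{x\}_{d-1}=\q\cap K\{x\}_{d-1}=\q''$, and $\q$ is minimal over $\ida$ by construction. Finally, $\idb$ is generated by $\ida$ and $\pa(\ida)$; since $\ida\subseteq\q\subseteq\q'$ and $\pa(\ida)\subseteq\pa(\q)\subseteq\q'$ (using $\ida\subseteq\q$), we conclude $\idb\subseteq\q'$, and in particular $1\notin\idb$ because $\q'$ is a prime ideal. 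The main obstacle I anticipate is the second step: producing a minimal prime $\q$ over $\ida$ with the prescribed contraction $\q''$ and verifying $\pa(\q'')\subseteq\q$ — this requires being careful about minimality being preserved under localization and about the fact that $\q''$ minimal over $\ida''$ does not automatically make $S^{-1}\q$ the unique minimal prime of $S^{-1}\ida$, so one genuinely has to choose $\q$ rather than receive it for free, and then argue that the inclusion $\pa(\q'')\subseteq\q$ survives. Everything else is a routine bookkeeping of contractions of prime ideals together with a direct invocation of Lemma \ref{lemma lando}.
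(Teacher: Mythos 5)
Your proposal is correct in outline and is essentially the paper's proof dressed in the language of localization: the skeleton --- produce a minimal prime $\q$ of $\ida$ with $\q\cap K\{x\}_{d-1}=\q''$, verify $\pa(\q'')\subseteq\q$, feed $\q$ into Lemma \ref{lemma lando}, and then read off $\idb\subseteq\q'$ --- is exactly the one used in the paper. The only real difference is how $\pa(\q'')\subseteq\q$ is checked: the paper picks, for $p\in\q''$, a multiplier $q\notin\q''$ with $pq\in\ida''$ (this is where minimality of $\q''$ over the radical ideal $\ida''$ enters) and expands $\pa(pq)=p\pa(q)+\pa(p)q\in\ida\subseteq\q$ to get $\pa(p)\in\q$ by primality of $\q$; your saturation argument is the same computation with the multiplier playing the role of a denominator, and the step you leave implicit --- that $\pa$ carries $S^{-1}\ida''=S^{-1}\q''$ into $S^{-1}\ida$, equivalently that $\pa(\q'')$ lies in the saturation of $\ida$ with respect to $S$, which is contained in $\q$ --- is precisely where the Leibniz/quotient rule must be written out. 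Two points need tightening. First, the existence of a minimal prime $\q$ of $\ida$ contracting exactly to $\q''$ has nothing to do with the hypothesis $\pa(\ida'')\subseteq\ida$; it follows from $\ida$ being a radical ideal in a Noetherian ring: $\ida''=\ida\cap K\{x\}_{d-1}$ is the finite intersection of the contractions of the minimal primes of $\ida$, and $\q''$, being a prime containing this intersection and minimal over $\ida''$, must equal one of these contractions. Second, in the localized picture an arbitrary minimal prime of $S^{-1}\ida$ need not contract onto the maximal ideal $S^{-1}\q''$ (its contraction could be strictly smaller), so you must select the minimal prime whose contraction to $S^{-1}K\{x\}_{d-1}$ equals $S^{-1}\q''$, which exists by the same finite-intersection argument; you flag this yourself, and it is the correct resolution. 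With these details supplied your plan closes exactly as in the paper: $\ida\subseteq\q\subseteq\q'$ and $\pa(\ida)\subseteq\pa(\q)\subseteq\q'$ give $\idb\subseteq\q'$, hence $1\notin\idb$.
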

\begin{proof}
Set $\ida''=\ida\cap K\{x\}_{d-1}$ and let $\q''\subset K\{x\}_{d-1}$ be a minimal prime ideal above $\ida''$. There exists a minimal prime ideal $\q\subset K\{x\}_{d}$ above $\ida$ with $\q\cap K\{x\}_{d-1}=\q''$.

We will show that $\pa(\q'')\subset\q$. So let $p\in\q''$. Because $\q''$ is minimal above $\ida''$ there exists a $q\in K\{x\}_{d-1}$, $q\notin\q''$ such that $pq\in\ida''$. By assumption we have $\pa(pq)\in\ida$. Because $\pa(pq)=p\pa(q)+\pa(p)q$
it follows from $p,\pa(pq)\in\q$ that $\pa(p)q\in\q$. As $q\notin\q$ this implies $\pa(p)\in\q$.

Thus $\pa(\q'')\subset\q$ and we can apply Lemma \ref{lemma lando} to obtain a prime ideal $\q'$ of $K\{x\}_{d+1}$ with $\q'\cap K\{x\}_d=\q$ and $\pa(\q)\subset\q'$. Then $\ida\subset\q\subset\q'$ and $\pa(\ida)\subset\pa(\q)\subset\q'$. Consequently $\idb\subset \q'$.
\end{proof}

It is interesting to note that Lemma \ref{lemma main} does not generalize to the partial case (i.e. to the case of several \emph{commuting} derivations) in an obvious fashion. This is due to the existence of the so-called integrability conditions. This is the main reason why this article is restricted to the case of one derivation parameter, i.e. $\Pi=\{\pa\}$.

We illustrate the phenomena with a simple example. (Cf. \cite[Ex. 2.3.9, p. 34]{Seiler:Involution}.)

\begin{ex}
Let $K=\mathbb{C}(u,v)$ denote the field of rational functions in two variables $u$ and $v$ over the field of complex numbers. Let $\Pi=\{\pa_1,\pa_2\}$ and consider $K$ as $\Pi$-field with the standard derivations, i.e. $\pa_1=\frac{\operatorname{d}}{\operatorname{d}u}$ and $\pa_2=\frac{\operatorname{d}}{\operatorname{d}v}$. Let $x=x_1$ denote a single $\Pi$-variable over $K$. Clearly the ideal $\ida$ of $K\{x\}_1=K[x,\pa_1(x),\pa_2(x)]$ generated by $v\pa_1(x)+1$ and $\pa_2(x)$ is radical and does not contain $1$. We have
$\ida''=\ida\cap K\{x\}_0=\ida\cap K[x]=\{0\}$. Thus $\pa_1(\ida'')\subset\ida$ and $\pa_2(\ida'')\subset\ida$.
Let $\idb$ denote the ideal of
\[K\{x\}_2=K[x,\pa_1(x),\pa_2(x),\pa_1^2(x),\pa_1\pa_2(x),\pa_2^2(x)]\]
generated by $\ida$, $\pa_1(\ida)$ and $\pa_2(\ida)$. Then $\pa_2(v\pa_1(x)+1)=v\pa_1\pa_2(x)+\pa_1(x)\in\idb$ and $\pa_1(\pa_2(x))=\pa_1\pa_2(x)\in\idb$. Therefore also $\pa_1(x)\in\idb$. But since $v\pa_1(x)+1\in\ida\subset\idb$ this gives $1\in\idb$.
\end{ex}

We shall also need the following lemma.

\begin{lemma} \label{lemma finite}
Let $K$ be a $\sdp$-field and $R$ a finitely $\pa$-generated $\sd$-simple $K$-$\sdp$-algebra. Assume that $R^\sd$ is algebraic over $K^\sd$. Then $R^\sd$ is a finite field extension of $K^\sd$.
\end{lemma}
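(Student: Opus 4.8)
The plan is to show that $R^\sd$ is finitely generated as a field extension of $K^\sd$; combined with the hypothesis that it is algebraic, this forces finiteness. The key point is that $\sd$-simplicity of $R$ gives strong control over $R$ as a $\pa$-algebra, which in turn controls the constants.

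First I would invoke the structure that comes from $R$ being finitely $\pa$-generated and $\sd$-simple. Write $R = K\{z_1,\ldots,z_r\}_\pa$ for finitely many $\pa$-generators. For each $d \ge 0$ set $R_d = K[z_i, \pa(z_i),\ldots,\pa^d(z_i) : 1 \le i \le r]$, so that $R = \bigcup_{d \ge 0} R_d$ and each $R_d$ is a finitely generated $K$-algebra (in the ordinary sense). The subtlety is that $R_d$ need not be $\sd$-stable. However $\sd$-simplicity of $R$ is what we leverage: a standard argument (as in the classical Picard–Vessiot setting, cf. the treatment in \cite{SingerPut:difference} and \cite{HardouinSinger:DifferentialGaloisTheoryofLinearDifferenceEquations}) shows that the field of $\sd$-constants of a $\sd$-simple $K$-$\sd$-algebra which is finitely generated as a $K$-algebra is a \emph{finitely generated} field extension of $K^\sd$. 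The idea: $R^\sd$ is a field because $R$ is $\sd$-simple (a nonzero $\sd$-constant generates a $\sd$-ideal, hence is a unit), and one shows $\operatorname{Frac}(R)^\sd$ has transcendence degree over $K^\sd$ bounded by the transcendence degree of $\operatorname{Frac}(R)$ over $K$, which is finite since $R$ is finitely generated.

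Next I would pass from $R$ to a suitable finitely generated piece. Since $R^\sd$ is algebraic over $K^\sd$, each element of $R^\sd$ lies in some $R_d$. But I claim $R^\sd$ is actually contained in a single $R_d$ up to the $\sd$-action: more precisely, I would argue that for $d$ large enough the finitely generated $K$-algebra $R_d$ already contains enough of $R^\sd$. The cleanest route: let $S$ be the $\sd$-subalgebra of $R$ generated by $R_d$; one checks $S$ is again $\sd$-simple (it is a union of $\sd$-stable finitely generated pieces and inherits simplicity, or one works directly with $R$), and $S^\sd = R^\sd \cap S$. Choosing $d$ so that a transcendence basis — here, by algebraicity, simply a finite generating set — of $R^\sd/K^\sd$ lies in $R_d$, we get that $R^\sd$ is generated over $K^\sd$ by finitely many algebraic elements. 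A finitely generated algebraic field extension is finite, so $R^\sd$ is a finite extension of $K^\sd$, as desired.

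The main obstacle I anticipate is the non-$\sd$-stability of the filtration pieces $R_d$: the derivations $\de \in \Delta$ and automorphisms $\s \in \Sigma$ commute with $\pa$ but may raise the $\pa$-order, so $R_d$ itself is not a $K$-$\sd$-subalgebra and one cannot naively apply the classical finitely-generated-constants result to $R_d$. The resolution is to apply the classical result to $R$ itself — which \emph{is} $\sd$-simple — after observing that for the purpose of bounding $\operatorname{trdeg}_{K^\sd}(\operatorname{Frac} R^\sd)$ one only needs $R$ to be a finitely generated $K$-algebra \emph{modulo} the $\pa$-direction, or, more carefully, one localizes and uses that $R^\sd$ embeds in the constants of the Picard–Vessiot ring $R_d[\tfrac{1}{\det Z}]$ of \cite[Prop. 6.21]{HardouinSinger:DifferentialGaloisTheoryofLinearDifferenceEquations} for each $d$. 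Since by hypothesis $R^\sd$ is algebraic, the transcendence-degree bound degenerates and all that remains is the elementary fact that an algebraic field extension generated by finitely many elements is finite; the real content is packaging the $\sd$-simplicity correctly so that $R^\sd$ is seen to be generated by finitely many elements in the first place.
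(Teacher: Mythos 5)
There is a genuine gap: your argument never actually establishes that $R^\sd$ is finitely generated over $K^\sd$, which you yourself identify as ``the real content.'' The transcendence-degree bound you invoke cannot do this job, because an algebraic extension has transcendence degree $0$ whether it is finite or infinite; bounding $\operatorname{trdeg}$ says nothing about finite generation in exactly the case at hand. The later step ``choosing $d$ so that \ldots\ by algebraicity, simply a finite generating set of $R^\sd/K^\sd$ lies in $R_d$'' is circular: the existence of a finite generating set is precisely what the lemma asserts. Nor does the filtration help directly: $R$ is only finitely \emph{$\pa$-}generated, so it is not a finitely generated $K$-algebra, each $R_d$ fails to be $\sd$-stable, and $R^\sd$ does not sit inside the constants of any single $R_d[\tfrac{1}{\det Z}]$ --- at best each individual constant does, which only yields algebraicity (this is in fact how the paper proves algebraicity in Theorem \ref{theo main}), not finiteness.

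The paper's proof supplies the two ingredients your sketch is missing. First, using the structure theorem for $\sd$-simple finitely $\pa$-generated algebras ($R=e_1R\oplus\cdots\oplus e_tR$ with the $e_iR$ domains, and $R^\sd\cong(e_1R)^{\widetilde{\Sigma}\Delta}$), one reduces to the case where $R$ is an integral domain, so that one can pass to the quotient field $L$, a $\sdp$-field finitely $\pa$-generated over $K$ with $L^\sd=R^\sd$. Second --- and this is the crucial nontrivial input absent from your proposal --- one applies Kolchin's theorem that an \emph{algebraic intermediate $\pa$-field of a finitely $\pa$-generated $\pa$-field extension is finite} to the compositum $M=KL^\sd\subset L$, and then uses the linear disjointness $M=K\otimes_{K^\sd}L^\sd$ (valid because $R$ is $\sd$-simple) to descend the finiteness of $M|K$ to $L^\sd|K^\sd$. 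Without some such differential-algebraic finiteness theorem, nothing in your argument rules out $R^\sd$ being an infinite algebraic extension of $K^\sd$; any repair of your approach would have to import this ingredient (or reprove it), at which point you are essentially following the paper's route.
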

\begin{proof}
We know from \cite[Lemma 6.8, p. 361]{HardouinSinger:DifferentialGaloisTheoryofLinearDifferenceEquations} or \cite[Prop. 2.4, p. 750]{AmanoMasuoka:artiniansimple} that $R$ has a very special structure: There exist orthogonal idempotent elements $e_1,\ldots,e_t\in R$ such that
\begin{itemize}
\item $R=e_1R\oplus\cdots\oplus e_tR$.
\item The $e_iR$'s are integral domains stable under $\Delta$, $\pa$ and $\widetilde{\Sigma}=\{\s^t|\ \s\in\Sigma\}$. Moreover $e_iR$ is $\widetilde{\Sigma}\Delta\partial$-simple.
\item Let $G$ be the group generated by all the automorphisms $\s\in\Sigma$. Then the action of $G$ on $\{e_1R,\ldots,e_tR\}$ is transitive. In particular $G_1=\{g\in G|\ g(e_1R)=e_1R\}$ is a subgroup of $G$ of index $t$.
\end{itemize}

The map $R^\sd\to (e_1R)^{\widetilde{\Sigma}\Delta}$ given by $r\mapsto e_1r$ is an isomorphism of fields (cf. \cite[Lemma 1.6, p. 748]{AmanoMasuoka:artiniansimple} ). (The inverse is given by $s\mapsto \sum_{i=1}^t g_i(s)$ where $g_1,\ldots,g_t$ is some system of representatives of $G/G_1$.)

Summarily this shows that we can assume without loss of generality that $R$ is an integral domain. The advantage of this reduction is that we can now consider the quotient field $L$ of $R$. This is naturally a $\sdp$-field extension of $K$, finitely $\pa$-generated as a $\pa$-field extension of $K$. Let $M=KL^\sd\subset L$ denote the field compositum of $K$ and $L^\sd$ inside $L$. Then $M$ is an intermediate $\sdp$-field of $L|K$. Because $R$ is $\sd$-simple $R^\sd=L^\sd$. By assumption $R^\sd$ is algebraic over $K^\sd$, so in particular $M$ is algebraic over $K$. But an algebraic intermediate $\pa$-field of a finitely $\pa$-generated $\pa$-field extension is finite (\cite[Cor. 2, Chapter II, Section 11, p. 113]{Kolchin:differentialalgebraandalgebraicgroups}). So $M$ is a finite field extension of $K$. Because $K$ and $L^\sd$ are linearly disjoint over $K^\sd$, i.e.
$M=K\otimes_{K^\sd} L^\sd$ (\cite[Lemma 6.11, p. 364]{HardouinSinger:DifferentialGaloisTheoryofLinearDifferenceEquations}) this implies that the field extension $L^\sd|K^\sd$ is finite.
\end{proof}

Now we are prepared to prove the main theorem of this short note.

\begin{theo} \label{theo main}
Let $K$ be a $\Sigma\Delta\pa$-field. Then for every $\sd$-linear system over $K$ there exists a $\pa$-Picard-Vessiot ring $R$ such that $R^{\sd}$ is a finite algebraic field extension of $K^{\sd}$.
\end{theo}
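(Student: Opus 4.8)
\emph{Setup.} The plan is to produce $R$ as a quotient $S/\ida$ of a ``universal'' solution algebra $S$, building the $\sdp$-ideal $\ida$ one $\pa$-order at a time so that it remains $\sd$-maximal at every finite level. Let $X=(X_{ij})$ be an $n\times n$ matrix of $\pa$-variables and let $S=K\{X,\tfrac{1}{\det X}\}_\pa$ be the $\pa$-polynomial $K$-algebra in the $X_{ij}$, localized at $\det X$. The relations $\s_i(X)=A_iX$, $\de_i(X)=B_iX$ together with the requirement that all operators commute with $\pa$ equip $S$ with a $K$-$\sdp$-algebra structure: the $\s_i$ act as automorphisms because the $A_i$ are invertible, and the integrability conditions (\ref{integrability conditions}) make the operators in $\Sigma\cup\Delta$ commute with one another. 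Put $S_d=K[X,\pa(X),\ldots,\pa^d(X),\tfrac{1}{\det X}]\subseteq S$; each $S_d$ is a finitely generated, $\sd$-stable $K$-subalgebra with $\pa(S_{d-1})\subseteq S_d$, and $S=\bigcup_{d\ge 0}S_d$. A $K$-$\sdp$-algebra that is $\pa$-generated by a fundamental solution matrix (together with the inverse of its determinant) is precisely a quotient $R=S/\ida$ by a $\sdp$-ideal $\ida$ with $1\notin\ida$, and such an $R$ is a $\pa$-Picard-Vessiot ring exactly when it is $\sd$-simple. Since $R=S/\ida$ is automatically finitely $\pa$-generated over $K$, once $\sd$-simplicity is achieved Lemma \ref{lemma finite} reduces the claim about $\sd$-constants to showing that $R^\sd$ is merely algebraic over $K^\sd$.

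\emph{The inductive construction.} I would build an ascending chain $\ida_0\subseteq\ida_1\subseteq\cdots$ in which $\ida_d\subset S_d$ is a $\sd$-maximal ideal with $\pa(\ida_{d-1})\subseteq\ida_d$ and $\ida_d\cap S_{d-1}=\ida_{d-1}$. Start with any $\sd$-maximal ideal $\ida_0\subset S_0$ (it exists, as $S_0$ is Noetherian). Given $\ida_d$, let $\idb\subseteq S_{d+1}$ be the ideal generated by $\ida_d$ and $\pa(\ida_d)$; since $\pa$ commutes with $\Sigma\cup\Delta$ and $\ida_d$ is a $\sd$-ideal, $\idb$ is again a $\sd$-ideal. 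The ideal $\ida_d$ is reduced (being $\sd$-simple), and by the inductive hypothesis $\pa(\ida_d\cap S_{d-1})=\pa(\ida_{d-1})\subseteq\ida_d$; hence Lemma \ref{lemma main} applies to $\ida_d\subset S_d$ and yields in particular $1\notin\idb$. Now enlarge $\idb$ to a $\sd$-maximal ideal $\ida_{d+1}\subset S_{d+1}$. By construction $\pa(\ida_d)\subseteq\ida_{d+1}$, and $\ida_{d+1}\cap S_d$ is a proper $\sd$-ideal of $S_d$ containing the $\sd$-maximal ideal $\ida_d$, so $\ida_{d+1}\cap S_d=\ida_d$; thus the induction continues. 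It is exactly at this step that $\Pi=\{\pa\}$ is used: with several commuting derivation parameters the ideal generated by $\ida_d$ and the $\pa_j(\ida_d)$ need not be proper, as the example preceding Lemma \ref{lemma finite} shows.

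\emph{Conclusion.} Put $\ida=\bigcup_{d\ge 0}\ida_d$. This is a $\sdp$-ideal of $S$ with $1\notin\ida$ and $\ida\cap S_d=\ida_d$ for all $d$, so $R:=S/\ida$ is a $K$-$\sdp$-algebra satisfying (i) of Definition \ref{defi paPV}. If $\mathfrak c\supsetneq\ida$ is a $\sd$-ideal of $S$, then $\mathfrak c\cap S_d\supsetneq\ida_d$ for some $d$, which forces $\mathfrak c\cap S_d=S_d$ by $\sd$-maximality of $\ida_d$; hence $R$ is $\sd$-simple, i.e.\ a $\pa$-Picard-Vessiot ring. Finally $R=\bigcup_d R_d$ with $R_d=S_d/\ida_d$, so $R^\sd=\bigcup_d R_d^\sd$. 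Each $R_d$ is a finitely generated $\sd$-simple $K$-$\sd$-algebra (indeed, essentially a Picard-Vessiot ring in the non-parameterized sense for a prolongation of (\ref{linear system})), so by the classical, $\pa$-free theory its field of $\sd$-constants is a finite algebraic extension of $K^\sd$. Hence $R^\sd$ is algebraic over $K^\sd$, and Lemma \ref{lemma finite} upgrades this to the assertion that $R^\sd$ is a finite algebraic field extension of $K^\sd$.

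\emph{Main obstacle.} The delicate point is keeping the ideal simultaneously $\sd$-maximal and $\pa$-compatible: there is a priori no reason why the ideal generated by $\ida_d$ and $\pa(\ida_d)$ should avoid the unit ideal, and ruling this out for one derivation is exactly the content of Lemma \ref{lemma main} (which in turn rests on Lando's Lemma \ref{lemma lando}). The only ingredient used from outside the excerpt is the classical fact that a finitely generated $\sd$-simple $K$-$\sd$-algebra has $\sd$-constants finite over $K^\sd$ --- this is standard in the pure difference and the pure differential cases and is the $d=0$ content of the construction.
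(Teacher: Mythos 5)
Your construction is essentially the paper's: the same universal algebra $S=K\{X,\tfrac{1}{\det(X)}\}_\pa$, the same tower of $\sd$-maximal ideals $\m_d\subset S_d$ with $\m_{d+1}\cap S_d=\m_d$ and $\pa(\m_d)\subset\m_{d+1}$, and the same endgame (algebraic constants at each finite level plus Lemma \ref{lemma finite}). There is, however, one genuine gap, and it sits exactly at the crucial point of the induction: you invoke Lemma \ref{lemma main} ``for $\ida_d\subset S_d$'', but that lemma is stated and proved for ideals of the $\pa$-polynomial rings $K\{X\}_d$, not for their localizations $S_d=K\{X\}_d\left[\tfrac{1}{\det(X)}\right]$. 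What the lemma gives directly is that the ideal $\widetilde{\idb}\subset K\{X\}_{d+1}$ generated by $\ida:=\m_d\cap K\{X\}_d$ and $\pa(\ida)$ does not contain $1$; this does not yet show that the ideal $\idb\subset S_{d+1}$ generated by $\m_d$ and $\pa(\m_d)$ is proper, because $1\in\idb$ only forces $\det(X)^m\in\widetilde{\idb}$ for some $m\geq 1$, not $1\in\widetilde{\idb}$. The paper closes exactly this gap: by Lemma \ref{lemma main} there is a prime $\q'\supseteq\widetilde{\idb}$ of $K\{X\}_{d+1}$ such that $\q=\q'\cap K\{X\}_d$ is a \emph{minimal} prime above $\ida$; if $\det(X)^m\in\widetilde{\idb}\subseteq\q'$ then $\det(X)^m\in\q$, and minimality yields $p\notin\ida$ with $p\det(X)^m\in\ida$, contradicting the fact that $\ida=\m_d\cap K\{X\}_d$ is saturated with respect to $\det(X)$. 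This is precisely why Lemma \ref{lemma main} is formulated with the extra conclusions about $\q$ and $\q'$ rather than merely as ``$1\notin\idb$''. Alternatively, you could verify that Lemmas \ref{lemma lando} and \ref{lemma main} hold verbatim, with the same proofs, for the localized rings $S_d$ (the residue-field construction is unaffected since $\det(X)$ maps to a unit); but one of these two arguments must be supplied --- as written, the decisive step $1\notin\idb$ is asserted rather than proved.

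A minor further point: at the finite levels you claim $R_d^\sd$ is \emph{finite} over $K^\sd$ ``by the classical, $\pa$-free theory''; the standard references prove such statements under the hypothesis of algebraically closed constants, and in the general case the paper is careful to claim only that $R_d^\sd$ is algebraic over $K^\sd$ (by adapting the Chevalley-theorem argument), which is all that is needed before Lemma \ref{lemma finite} does the upgrading. Your overall logic survives with ``algebraic'' in place of ``finite'', so this is a matter of attribution and precision rather than a flaw in the argument.
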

\begin{proof}
We consider a $\sd$-linear system given as in (\ref{linear system}). Let $X=(X_{ij})_{1\leq i,j\leq n}$ be a matrix of $\pa$-indeterminates over $K$ and let $S=K\{X,\frac{1}{\det(X)}\}_\pa$ denote the $\pa$-polynomial ring in the $\pa$-indeterminates $X_{ij}$ $(1\leq i,j\leq n)$ localized at the multiplicatively closed subset generated by $\det(X)$. We can turn $S$ into a $K$-$\sdp$-algebra by postulating that
\begin{align*}
\s_i(X)&=A_iX\ \text{ for all } \s_i\in\Sigma, \\
\de_i(X)&=B_iX\ \text{ for all } \de_i\in\Delta
\end{align*}
and that all elements of $\Sigma\cup\Delta$ commute with $\pa$ (cf. \cite[Remark after Def. 6.10, p. 363]{HardouinSinger:DifferentialGaloisTheoryofLinearDifferenceEquations}). Note that the integrability conditions (\ref{integrability conditions}) assure that also any two elements of $\Sigma\cup\Delta$ commute.

For $d\geq 0$ let
\[S_d=K\left[X,\pa(X),\ldots,\pa^d(X),\tfrac{1}{\det(X)}\right]\subset S \]
denote the subring of all $\pa$-polynomials of order less than or equal to $d$ localized at $\det(X)$. Note that $S_d$ is a $K$-$\sd$-subalgebra of $S$, $S=\cup_{d\geq 0} S_d$ and
$\pa\colon S\to S$ restricts to a derivation $\pa\colon S_{d-1}\rightarrow S_d$.

Similarly, let $K\{X\}_d=K[X,\ldots,\pa^d(X)]\subset S_d$ denote the $K$-$\sd$-algebra of $\pa$-polynomials of order less than or equal to $d$.

\bigskip

We will show by induction on $d\geq 0$ that there exists a sequence $(\m_d)_{d\geq 0}$ with the following properties:
\begin{enumerate}
\item $\m_d$ is a $\sd$-maximal ideal of $S_d$, i.e. a maximal element in the set of all $\sd$-ideals of $S_d$ ordered by inclusion.
\item $\m_d\cap S_{d-1}=\m_{d-1}$.
\item $\pa(\m_{d-1})\subset \m_d$.
\end{enumerate}
For $d=0$ conditions (ii) and (iii) are understood to be empty. So we can choose any $\sd$-maximal ideal $\m_0$ of $S_0=K[X,\tfrac{1}{\det(X)}]$ (which exists by Zorn's Lemma).

Assume now that the sequence $\m_0,\ldots,\m_{d}$ has already been constructed. Let $\idb\subset S_{d+1}$ denote the ideal of $S_{d+1}$ generated by $\m_{d}$ and $\pa(\m_{d})$. The crucial point now is to show that $1\notin\idb$. For this we can use Lemma \ref{lemma main} as follows: Set $\ida=\m_d\cap K\{X\}_d$ and
$\ida''=\ida\cap K\{X\}_{d-1}=\m_{d-1}\cap K\{X\}_{d-1}$. Because a $\sd$-maximal ideal is radical (\cite[Lemma 6.7, p. 361]{HardouinSinger:DifferentialGaloisTheoryofLinearDifferenceEquations}) $\m_d$ and $\ida$ are radical ideals. By construction $\pa(\ida'')\subset\ida$. Thus we can apply Lemma \ref{lemma main} to conclude that the ideal $\widetilde{\idb}$ of $K\{X\}_{d+1}$ generated by $\ida$ and $\pa(\ida)$ does not contain $1$.

Suppose that $1\in\idb$. Since $\idb$ agrees with the ideal of $S_{d+1}$ generated by $\widetilde{\idb}$ this implies that there exists an integer $m\geq 1$ such that $\det(X)^m\in\widetilde{\idb}$. By Lemma \ref{lemma main} there exists a prime ideal $\q'$ of $K\{X\}_{d+1}$ with $\widetilde{\idb}\subset\q'$ such that $\q=\q'\cap K\{X\}_d$ is a minimal prime above $\ida$. But then it follows from $\det(X)^m\in\widetilde{\idb}$ that $\det(X)^m\in\q$. As $\q$ is minimal above $\ida$ this implies that there exist $p\in K\{X\}_d$, $p\notin\ida$ such that $p\det(X)^m$ lies in $\ida$. This is in contradiction to $\ida=\m_{d}\cap K\{X\}_{d}$, i.e. $\ida$ is satured with respect to $\det(X)$.

Therefore $1\notin\idb$. By construction $\idb$ is a $\sd$-ideal of $S_{d+1}$. Let $\m_{d+1}\subset S_{d+1}$ denote a $\sd$-maximal $\sd$-ideal of $S_{d+1}$ containing $\idb$. Then $\m_{d+1}\cap S_d$ is a $\sd$-ideal of $S_d$ containing $\m_d$. As $\m_d$ is $\sd$-maximal this gives $\m_{d+1}\cap S_d=\m_d$. By construction we have $\pa(\m_d)\subset \idb\subset\m_{d+1}$. Thus $\m_{d+1}$ has the desired properties and the existence of the sequence $(\m_d)_{d\geq 0}$ is established.

\bigskip

Now we can set $\m=\cup_{d\geq 0}\m_d\subset S$. Because of condition (iii) $\m$ is a $\sdp$-ideal of $S$. Moreover it follows easily from condition (i) that $\m$ is $\sd$-maximal.
Therefore the quotient ring $R=S/\m$ is a $\sd$-simple $K$-$\sdp$-algebra. By construction $R$ satisfies condition (i) of Definition \ref{defi paPV}, i.e. $R$ is a $\pa$-Picard-Vessiot ring for our $\sd$-linear system (\ref{linear system}).

Next we will show that $R^\sd$ is algebraic over $K^\sd$. In general if $K$ is a $\sd$-field and $T$ a $\sd$-simple $K$-$\sd$-algebra that is finitely generated as a $K$-algebra then $T^\sd$ is an algebraic field extension of $K^\sd$. In all generality there appears to be no suitable reference for this result in the literature. However, the standard proof (based on Chevalley's theorem) can easily be adopted. (See \cite[Prop. 13.7, p. 4491]{Kovacic:differentialgaloistheoryofstronglynormal} for the case $\Sigma=\emptyset$, $\Delta=\{\delta_1,\ldots,\delta_m\}$ and $K^\Delta$ algebraically closed; \cite[Lemma 1.8, p. 7]{SingerPut:difference} for the case $\Sigma=\{\sigma\}$, $\Delta=\emptyset$ and $K^\sigma$ algebraically closed; \cite[Theorem 4.4, p. 505]{Takeuchi:hopfalgebraicapproach} for the $C$-ferential case.)
It follows from this result that $(S_d/\m_d)^\sd$ is an algebraic field extension of $K^\sd$ for every $d\geq 0$. Since $R$ can be interpreted as the union of all the $S_d/\m_d$'s this shows that $R^\sd$ is an algebraic field extension of $K^\sd$.

It remains to see that $R^\sd$ is a finite extension of $K^\sd$. But this is clear from Lemma \ref{lemma finite}.
\end{proof}

\begin{cor} \label{cor main1}
Let $K$ be $\sdp$-field and consider a $\sd$-linear system of the form (\ref{linear system}) over $K$. Then there exists a finite algebraic $\pa$-field extension $\widetilde{k}$ of $k=K^\sd$ such that
there exists a $\pa$-Picard-Vessiot extension for (\ref{linear system}) over $\widetilde{K}=K\otimes_k\widetilde{k}$.
\end{cor}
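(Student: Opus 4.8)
The plan is to reduce the corollary to Theorem \ref{theo main} by enlarging the constant field just enough to recover condition (ii) of Definition \ref{defi paPV}, namely $L^\sd = K^\sd$. First I would apply Theorem \ref{theo main} to the given $\sd$-linear system over $K$: this produces a $\pa$-Picard-Vessiot ring $R$ for (\ref{linear system}) whose ring of $\sd$-constants $R^\sd$ is a finite algebraic field extension of $k = K^\sd$. Set $\widetilde{k} = R^\sd$; since all fields here have characteristic zero and $\pa$ commutes with $\Sigma$ and $\Delta$, the derivation $\pa$ restricts to $R^\sd$ (it preserves the $\sd$-constants because the operators commute), so $\widetilde{k}$ is naturally a finite algebraic $\pa$-field extension of $k$. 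The natural candidate for the Picard-Vessiot extension over $\widetilde{K} = K \otimes_k \widetilde{k}$ is then $L = R \otimes_{R^\sd} \widetilde{k} = R$ itself, or rather the total ring of fractions of $R$ viewed as a $\widetilde{K}$-algebra.

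The key steps, in order, are as follows. First, since $R$ contains $\widetilde{k} = R^\sd$, one has a natural $K$-$\sdp$-algebra map $\widetilde{K} = K \otimes_k \widetilde{k} \to R$, and I would check this is injective, using that $K$ and $R^\sd$ are linearly disjoint over $k$ inside $R$ (this is the linear disjointness statement \cite[Lemma 6.11, p. 364]{HardouinSinger:DifferentialGaloisTheoryofLinearDifferenceEquations} already invoked in the proof of Lemma \ref{lemma finite}, applied to a domain quotient of $R$; one reduces to the integral-domain case exactly as there). So $R$ is a $\widetilde{K}$-$\sdp$-algebra. Second, $R$ is still $\sd$-simple as a $\widetilde{K}$-$\sdp$-algebra (being $\sd$-simple is a property of the ring with its $\sd$-action, independent of which subfield of constants one regards as the base), and it is still $\pa$-generated over $\widetilde{K}$ by the same fundamental solution matrix $Z$ together with $1/\det(Z)$, since $\widetilde{K} \supseteq K$. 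Third — and this is the crucial point — the $\sd$-constants of $R$ as a $\widetilde{K}$-$\sdp$-algebra are $R^\sd = \widetilde{k} = \widetilde{K}^\sd$, because $\widetilde{K}^\sd \supseteq \widetilde{k} = R^\sd \supseteq \widetilde{K}^\sd$ (the outer inclusion holds as $\widetilde{K} = K \otimes_k \widetilde{k}$ has $\sd$-constants exactly $\widetilde{k}$, again by linear disjointness of $K$ and $\widetilde{k}$ over $k$). Thus the condition ``new constants equal old constants'' is now satisfied by fiat. Finally, to get an honest $\pa$-Picard-Vessiot \emph{extension} rather than just a ring, I would pass to the total ring of fractions $L = \operatorname{Quot}(R)$: by the structure theory of $\sd$-simple algebras recalled in the proof of Lemma \ref{lemma finite} (finitely many idempotents decomposing $R$ into a product of $\widetilde{\Sigma}\Delta\pa$-simple domains permuted transitively by $\Sigma$), $L$ is a $\Sigma$-pseudo $\Delta\pa$-field, it is $\pa$-generated by $Z$ over $\widetilde{K}$, and $L^\sd = R^\sd = \widetilde{K}^\sd$ because $R$ is $\sd$-simple. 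Hence $L$ is the desired $\pa$-Picard-Vessiot extension for (\ref{linear system}) over $\widetilde{K}$.

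The step I expect to require the most care is verifying that forming $\widetilde{K} = K \otimes_k \widetilde{k}$ and regarding $R$ over this new base does not spoil $\sd$-simplicity and does yield $\widetilde{K}^\sd = \widetilde{k}$ exactly — in other words, that the tensor product behaves well with respect to the $\sd$-structure. This is where linear disjointness of $K$ and $R^\sd$ over $k = K^\sd$ is essential, and it is the same technical input that already appears in Lemma \ref{lemma finite}; I would reduce to the integral-domain case via the idempotent decomposition before invoking it. The remaining verifications ($\pa$-generation, that $\pa$ and the $\sd$-operators restrict correctly to $\widetilde{k}$ and extend to $\widetilde{K}$, passage to the total quotient ring) are routine once this point is settled.
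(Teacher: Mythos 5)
Your proposal is correct and follows essentially the same route as the paper: apply Theorem \ref{theo main}, set $\widetilde{k}=R^\sd$, pass to the total ring of quotients $L$ of $R$, use the structure theory of finitely $\pa$-generated $\sd$-simple algebras to see $L$ is a $\Sigma$-pseudo $\Delta\pa$-field, and use $\sd$-simplicity of $R$ to get $L^\sd=R^\sd=\widetilde{k}=\widetilde{K}^\sd$. Your extra care with the injectivity of $\widetilde{K}=K\otimes_k\widetilde{k}\to R$ and with $\widetilde{K}^\sd=\widetilde{k}$ via linear disjointness only makes explicit what the paper leaves implicit.
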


Here one considers $\widetilde{k}$ as a constant $\sd$-field. First of all we note that in general $\widetilde{K}$ need not be a field. However, $\widetilde{K}$ is a $\Sigma$-pseudo $\Delta\pa$-field (cf. \cite[Prop. 1.4.15, p. 15 and Lemma 1.6.8, p. 24]{Wibmer:thesis}) and there is no obstacle in generalizing the definition of $\pa$-Picard-Vessiot extensions (Definition \ref{defi paPV}) from base $\sdp$-fields to base $\Sigma$-pseudo $\Delta\pa$-fields. In fact, if one wants to make sense of the statement that $L$ is a $\pa$-Picard-Vessiot extension of $L^H$ where $H$ is a closed $\Pi$-subgroup of the Galois group of the $\pa$-Picard-Vessiot extension $L$ of $K$, then one will need to make the definition in this generality.
Also note that $\widetilde{K}$ is a field if $\Sigma=\emptyset$ because then $k$ is relatively algebraically closed in $K$ (\cite[Corollary, Chapter II, Section 4, p. 94]{Kolchin:differentialalgebraandalgebraicgroups}).

\begin{proof}[Proof of Corollary \ref{cor main1}] By Theorem \ref{theo main} there exists a $\sdp$-Picard-Vessiot ring $R$ for (\ref{linear system}) over $K$ such that $\widetilde{k}=R^\sd$ is a finite algebraic field extension of $k=K^\sd$. Let $L$ denote the total ring of quotients of $R$. Then $L$ is naturally a $K$-$\sdp$-algebra. As $R$ is $\sd$-simple also $L$ is $\sd$-simple. Since $R$ is finitely $\pa$-generated over $K$ and $\sd$-simple we know that $R$ is a finite direct product of integral domains with a transitive action of the semigroup generated by $\Sigma$ on the factors (\cite[Lemma 6.8, p. 362]{HardouinSinger:DifferentialGaloisTheoryofLinearDifferenceEquations}). Consequently $L$ is a finite direct product of fields and the action of the semigroup generated by $\Sigma$ on the factors remains transitive. This shows that $L$ is a $\Sigma$-pseudo $\Delta\pa$-field. Since $R$ is $\sd$-simple the usual trick (cf. \cite[Lemma 1.17, p. 13]{SingerPut:differential}) shows that $L^\sd=R^\sd$. Thus $L^\sd=R^\sd=\widetilde{k}=\widetilde{K}^\sd$. This shows that $L$ is a $\pa$-Picard-Vessiot extension of $\widetilde{K}$ for the $\sd$-linear system (\ref{linear system}).
\end{proof}

\begin{cor}
Let $K$ be a $\sdp$-field such that $K^\sd$ is an algebraically closed field. Then for every $\sd$-linear system over $K$ there exists a $\pa$-Picard-Vessiot extension.
\end{cor}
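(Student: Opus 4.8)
The plan is to deduce this immediately from Corollary \ref{cor main1} (equivalently, from Theorem \ref{theo main}). First I would apply Corollary \ref{cor main1} to the given $\sd$-linear system: it produces a finite algebraic $\pa$-field extension $\widetilde{k}$ of $k=K^\sd$ together with a $\pa$-Picard-Vessiot extension for the system over $\widetilde{K}=K\otimes_k\widetilde{k}$. The key observation is that, since $K^\sd$ is algebraically closed, any finite algebraic extension $\widetilde{k}$ of $k=K^\sd$ must equal $k$ itself. Hence $\widetilde{K}=K\otimes_k\widetilde{k}=K\otimes_k k\cong K$, and the $\pa$-Picard-Vessiot extension over $\widetilde{K}$ furnished by Corollary \ref{cor main1} is already a $\pa$-Picard-Vessiot extension over $K$ in the sense of Definition \ref{defi paPV}.

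Alternatively, to keep things self-contained, I would run the argument from the proof of Corollary \ref{cor main1} directly: apply Theorem \ref{theo main} to obtain a $\pa$-Picard-Vessiot ring $R$ over $K$ with $R^\sd$ a finite algebraic field extension of $K^\sd$, note $R^\sd=K^\sd$ by algebraic closedness, and then pass to the total ring of quotients $L$ of $R$. Because $R$ is finitely $\pa$-generated over $K$ and $\sd$-simple, it is a finite direct product of integral domains permuted transitively by the semigroup generated by $\Sigma$ (\cite[Lemma 6.8]{HardouinSinger:DifferentialGaloisTheoryofLinearDifferenceEquations}), so $L$ is a finite direct product of fields with transitive $\Sigma$-action, i.e.\ a $\Sigma$-pseudo $\Delta\pa$-field, and it is $\sd$-simple. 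A fundamental solution matrix $Z\in\Gl_n(R)\subset\Gl_n(L)$ for the system still $\pa$-generates $L$ over $K$ (as $L$ is a localization of $R$), giving condition (i) of the definition of a $\pa$-Picard-Vessiot extension, and the usual trick (\cite[Lemma 1.17]{SingerPut:differential}) based on $\sd$-simplicity of $R$ yields $L^\sd=R^\sd=K^\sd$, which is condition (ii).

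There is no real obstacle here: the corollary is the degenerate case $\widetilde{k}=k$ of Corollary \ref{cor main1}, and all the substantive work has already been done in Theorem \ref{theo main} and Corollary \ref{cor main1}. The only point deserving a moment's care is the identification $L^\sd=R^\sd$ when passing from the $\pa$-Picard-Vessiot ring to the $\pa$-Picard-Vessiot extension, which is exactly the cited standard argument and rests solely on the $\sd$-simplicity of $R$.
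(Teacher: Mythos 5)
Your proposal is correct and matches the paper's approach: the paper proves this corollary simply by citing Corollary \ref{cor main1}, and your first paragraph spells out exactly the intended reasoning (algebraic closedness of $K^\sd$ forces $\widetilde{k}=k$, so $\widetilde{K}\cong K$). Your alternative self-contained argument is just an unwinding of the proof of Corollary \ref{cor main1} and is likewise fine.
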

\begin{proof}
Clear from Corollary \ref{cor main1}.
\end{proof}

\bigskip

{\bf Acknowledgments.} I would like to thank Mathias Lederer and all the people from the Kolchin Seminar for their warm hospitality during my trip to the USA in March 2011. Special thanks to Alexey Ovchinnikov, Lucia Di Vizio and Charlotte Hardouin for their assistance in the somewhat labyrinthine birth of this article.

\bibliographystyle{alpha}
\bibliography{bibdata.bib}

\end{document}